%
%

%
%

%
%

%
%
%
%

\documentclass{mrlart7}     
\pdfoutput=1   
\usepackage{pb-diagram,enumerate,amsmath}     
\usepackage{graphicx}
\usepackage{color}


\theoremstyle{plain}      
 
\newtheorem{thm}{Theorem}[section]     
\newtheorem{theorem}[thm]{Theorem}     
     
\newtheorem{corollary}[thm]{Corollary}     
     
\newtheorem{lemma}[thm]{Lemma}     
\newtheorem{prop}[thm]{Proposition}     
\newtheorem{proposition}[thm]{Proposition}

\theoremstyle{remark}

\theoremstyle{definition}

\renewcommand{\epsilon}{\varepsilon}

\def\al{{\alpha}}

\def\om{{\omega}}

\def\si{{\sigma}}
\def\Si{{\Sigma}}

\def\ep{{\varepsilon}}

\def\th{{\vartheta}}

\def\phi{{\varphi}}

\let\pa\partial
\let\na\nabla

\let\theta\vartheta
\let\phi\varphi
     
\def\cE{{\mathcal E}}

\def\cT{\mathcal{T}}

\DeclareMathAlphabet{\doba}{U}{msb}{m}{n}         
\gdef\mC{\doba{C}}

\gdef\mR{\doba{R}}

\gdef\mZ{\doba{Z}}

\def\Cl{{\mathop{\rm Cl}}}     
\def\End{{\mathop{\rm End}}}     
\def\divv{{\mathop{\rm div}}}     
\def\tr{{\mathop{\rm tr}}}     
\def\Id{\operatorname{Id}}    
\def\SO{\operatorname{SO}}
\def\GL{\operatorname{GL}}
\def\Spin{\operatorname{Spin}}    
\def\diag{\operatorname{diag}}    

\def\so{{\mathop{\mathfrak{so}}}}

\let\ti\tilde   

\def\eref#1{{\rm (\ref{#1})}}   

\let\<\langle 
\let\>\rangle 
\let\witi\widetilde


\newcommand{\definedas}{\mathrel{\raise.095ex\hbox{\rm :}\mkern-5.2mu=}}


  \setcounter{page}{10001}
  \overfullrule=5pt

\begin{document}     


\title{Harmonic spinors and local deformations of the metric}

\author{Bernd Ammann} 
\address{Bernd Ammann, Fakult\"at f\"ur Mathematik \\ 
Universit\"at Regensburg \\
93040 Regensburg \\  
Germany}
\email{bernd.ammann@mathematik.uni-regensburg.de}
\urladdr{http://www.berndammann.de}

\author{Mattias Dahl} 
\address{Mattias Dahl, Institutionen f\"or Matematik \\
Kungliga Tekniska H\"ogskolan \\
100 44 Stockholm \\
Sweden}
\email{dahl@math.kth.se}

\author{Emmanuel Humbert} 
\address{Emmanuel Humbert, Institut \'Elie Cartan, BP 239 \\ 
Universit\'e de Nancy 1 \\
54506 Vandoeuvre-l\`es-Nancy Cedex \\ 
France}
\email{humbert@iecn.u-nancy.fr}

\subjclass[2000]{53C27 (Primary) 55N22, 57R65 (Secondary)}

\date{June 7, 2011}

\keywords{Dirac operator, eigenvalue, surgery, index theorem}

\begin{abstract}
Let $(M,g)$ be a compact Riemannian spin manifold. The Atiyah-Singer 
index theorem yields a lower bound for the dimension of the kernel 
of the Dirac operator. We prove that this bound can be attained by 
changing the Riemannian metric $g$ on an arbitrarily small open set.  
\end{abstract}

\maketitle

\section{Introduction and statement of results}

Let $M$ be a spin manifold, we assume that all spin manifolds come
equipped with a choice of orientation and spin structure. The Dirac
operator $D^g$ of $(M,g)$ is a first order differential operator
acting on sections of the spinor bundle associated to the spin
structure on $M$. This is an elliptic, formally self-adjoint
operator. If $M$ is compact, then the spectrum of $D^g$ is real,
discrete, and the eigenvalues tend to plus and minus infinity. In this
case the operator $D^g$ is invertible if and only if $0$ is not an
eigenvalue, which is the same as vanishing of the kernel. 

The Atiyah-Singer Index Theorem states that the index of the Dirac
operator is equal to a topological invariant of the manifold,
\begin{equation*}
\operatorname{ind}(D^g) = \alpha(M),
\end{equation*}
see for example \cite[Theorem 16.6, p. 276]{Lawson_Michelsohn_89}.
Depending on the dimension $n$ of $M$ this formula has slightly
different interpretations. To explain this interpretation, it is
important to remark that we will always consider the spinor bundle as
a complex vector bundle, similar results with different dimensions
would also hold for the real spinor bundle or the 
${\rm C\ell}_n$-linear spinor bundle. If $n$ is even there is a
$\pm$-grading of the spinor bundle and the Dirac operator~$D^g$ has a
part $(D^g)^+$ which maps from positive to negative spinors. If 
$n \equiv 0, 4 \mod 8$ the index is integer valued and computed as the  
dimension of the kernel minus the dimension of the cokernel of 
$(D^g)^+$. If $n \equiv 1, 2 \mod 8$ the index is $\mZ/2\mZ$-valued 
and given by the dimension modulo $2$ of the kernel of~$D^g$
(if $n \equiv 1 \mod 8$) resp. $(D^g)^+$ (if $n \equiv 2 \mod 8$). In
other dimensions the index is zero. In all dimensions $\alpha(M)$ is a
topological invariant depending only on the spin bordism class of $M$.
In particular, $\alpha(M)$ does not depend on the metric, but it
depends on the spin structure in dimension $n\equiv 1,2 \mod 8$.
For further details see \cite[Chapter II, \S 7]{Lawson_Michelsohn_89}.

The index theorem implies a lower bound on the dimension of the kernel
of~$D^g$ which we can write succinctly as
\begin{equation}\label{lower.bound}
\dim \ker D^g \geq a(M),
\end{equation}
where
\begin{equation*}
a(M) \definedas
\begin{cases}
|\widehat{A}(M)|, &\text{if $n \equiv 0 \mod 4$;} \\
1, &\text{if $n \equiv 1 \mod 8$ and $\alpha(M)\neq 0$;} \\
2, &\text{if $n \equiv 2 \mod 8$ and $\alpha(M)\neq 0$;} \\
0, &\text{otherwise.} \\ 
\end{cases}.
\end{equation*}
If $M$ is not connected, then this lower bound can be improved by
studying each connected component of $M$. For this reason we restrict
to connected manifolds from now on.
 
Metrics $g$ for which equality holds in \eref{lower.bound} are called
$D$-minimal, see \cite[Section~3]{Baer_Dahl_02}. 
The existence of $D$-minimal metrics on all connected compact spin 
manifolds was established in \cite{Ammann_Dahl_Humbert_09} 
following previous work 
in \cite{Maier_97} and \cite{Baer_Dahl_02}. In this note we will 
strengthen this existence result by showing that one can find a 
$D$-minimal metric coinciding with a given metric outside a small 
open set. For a Riemannian manifold $(M,g)$ we denote by $U_p(r)$ the
set of points for which the distance to the point $p$ is strictly less
than $r$. We will prove the following theorem.

\begin{theorem} \label{main_thm}
Let $(M,g)$ be a compact connected Riemannian spin manifold of
dimension $n \geq 2$. Let $p \in M$ and $r > 0$. Then there is a
$D$-minimal metric $\widetilde{g}$ on~$M$ with $\widetilde{g} = g$ 
on~$M \setminus U_{p}(r)$.
\end{theorem}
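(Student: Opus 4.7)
The plan is to combine the global existence of $D$-minimal metrics from \cite{Ammann_Dahl_Humbert_09} with a localized B\"ar--Dahl-style perturbation argument in the spirit of \cite{Baer_Dahl_02}. Let $\mathcal{G}(g, U_p(r))$ denote the space of smooth Riemannian metrics on $M$ that coincide with $g$ outside $U_p(r)$; the task is to find a $D$-minimal metric in this space. Since the space of symmetric $(0,2)$-tensors compactly supported in $U_p(r)$ is infinite dimensional, there is a priori plenty of room to perform the perturbations needed to shrink the kernel of $D$.

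The core step is a local reduction lemma: whenever $g' \in \mathcal{G}(g, U_p(r))$ has $\dim \ker D^{g'} > a(M)$, every $C^\infty$-neighbourhood of $g'$ in $\mathcal{G}(g, U_p(r))$ contains a metric $g''$ with $\dim \ker D^{g''} < \dim \ker D^{g'}$. To prove this, pick a nonzero $\psi \in \ker D^{g'}$ lying in the ``excess'' part of the kernel; by unique continuation for Dirac-harmonic spinors, $\psi$ is nonzero on a dense open set, so in particular $\psi|_{U_p(r)} \not\equiv 0$. Using the Bourguignon--Gauduchon identification of spinor bundles for nearby metrics, a first-order perturbation of $g'$ by a symmetric $(0,2)$-tensor supported in $U_p(r)$ can be made to act nontrivially on $\psi$, destabilizing it as a harmonic spinor and dropping the kernel dimension. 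Applying the lemma a finite number of times produces a metric in $\mathcal{G}(g, U_p(r))$ whose Dirac kernel has dimension exactly $a(M)$.

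The main obstacle is to guarantee that this iteration actually reaches $a(M)$, rather than terminating at a strictly larger value forced by topology. The key complication arises in the dimensions $n \equiv 1, 2 \mod 8$ with $\alpha(M) \neq 0$, where $a(M)$ comes from a $\mZ/2\mZ$-valued index, so that the kernel dimension is constrained modulo $2$ under any continuous deformation. A naive perturbation of the metric may then fail to reduce the kernel at all, since generic perturbations preserve the mod-$2$ count, and one needs to perturb out pairs of harmonic spinors simultaneously while leaving the topologically protected ones in place. I expect that one must invoke, and adapt to the local setting, the surgery-type constructions of \cite{Ammann_Dahl_Humbert_09} that produced globally $D$-minimal metrics in these dimensions, verifying that they can be realized by metric deformations supported inside $U_p(r)$. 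This localization of the surgery, rather than the perturbation lemma itself, should be the technical heart of the proof.
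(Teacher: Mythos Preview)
Your approach is genuinely different from the paper's, and as written it has a real gap.

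The paper does not argue by iterated local perturbation. Its engine is the \emph{invertible double}: deform $g$ on $U_p(r)$ to be product near the boundary sphere, double $N = M \setminus U_p(r/2)$ along $\partial N$, and observe (via unique continuation across the hypersurface) that the resulting metric on $(-M)\# M$ has invertible Dirac operator while still agreeing with $g$ outside $U_p(r)$. One then takes a $D$-minimal metric on a manifold $N$ spin-bordant to $M$, notes that $S^n$ is obtained from $N \amalg (-M)$ by surgeries of codimension $\geq 3$ (or $\geq 2$ when $n=3,4$), forms the connected sum with $M$ at $p$ throughout the sequence, and lets the B\"ar--Dahl / Ammann--Dahl--Humbert surgery theorems transport $D$-minimality to $M$, with all metric changes confined to $U_p(r)$. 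The case $n=2$ is handled separately by a Teichm\"uller-type argument (holomorphic quadratic differentials), which you do not address at all.

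Your proposal rests entirely on the ``local reduction lemma'', but the sketch does not prove it. Arranging $\dot D_h \psi \neq 0$ for a single harmonic spinor is not the relevant condition; what governs the splitting of the zero eigenvalue is the full matrix $\bigl(\langle \dot D_h \psi_i, \psi_j\rangle\bigr)$ on $\ker D^{g'}$, and showing this can be made nonzero with $\operatorname{supp} h \subset U_p(r)$ is a Maier-type transversality computation sensitive to the real/quaternionic structures in the various dimensions mod $8$, not an immediate consequence of unique continuation. (Incidentally, the mod-$2$ index is not the obstruction you describe: perturbations do remove eigenvalues in $\pm$ pairs, so parity is preserved automatically and does not block the reduction.) You concede the main step is unresolved and fall back on ``I expect one must adapt the surgery-type constructions to the local setting'' --- but that localization \emph{is} the theorem, and the idea that makes it work, the invertible double, is absent from your outline. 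The paper does remark that a Maier-style argument can be adapted for $n=3,4$, so your instinct is not misguided in low dimensions; but for general $n$ your proposal stops exactly where the real work begins.
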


The new ingredient in the proof of this theorem is the use of the 
``invertible double'' construction which gives a $D$-minimal metric 
on any spin manifold of the type $(-M) \# M$ where $\#$ denotes 
connected sum and where $-M$ denotes $M$ equipped with the 
opposite orientation. 
For dimension $n \geq 5$ we can then use the surgery
method from \cite{Baer_Dahl_02} with surgeries of codimension 
$\geq 3$. For $n = 3, 4$ we need the stronger surgery result of 
\cite{Ammann_Dahl_Humbert_09} preserving $D$-minimality under
surgeries of codimension $\geq 2$. The case $n=2$ follows from
\cite{Ammann_Dahl_Humbert_09} and classical facts about Riemann
surfaces.

If a manifold has one $D$-minimal metric then generic metrics are
$D$-minimal, to formulate this precisely we introduce some notation. 
We denote by $\mathcal{R}(M,U_{p}(r),g)$ the set of all 
smooth Riemannian metrics on $M$ which coincide with the metric 
$g$ outside $U_{p}(r)$ and by $\mathcal{R}_{\rm min}(M, U_{p}(r), g)$ 
the subset of $D$-minimal metrics. From Theorem \ref{main_thm} it
follows that a generic metric from $\mathcal{R}(M , U_{p}(r) , g)$ is
actually an element of $\mathcal{R}_{\rm min}(M , U_{p}(r) , g)$, as
made precise in the following corollary.

\begin{corollary} \label{main_cor}
Let $(M,g)$ be a compact connected Riemannian spin manifold of 
dimension $\geq 3$. Let $p \in M$ and $r > 0$. Then
$\mathcal{R}_{\rm min}(M , U_{p}(r) , g)$ is open in the 
$C^1$-topology on $\mathcal{R}(M , U_{p}(r) , g)$ and it is 
dense in all $C^k$-topologies, $k \geq 1$. 
\end{corollary}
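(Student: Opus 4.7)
I would split the proof into openness and density.

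\textbf{Openness.} My plan is to use the standard upper semi-continuity of $g \mapsto \dim \ker D^g$ in the $C^1$-topology. Identifying the spinor bundles for nearby metrics via the Bourguignon--Gauduchon isomorphism, the Dirac operators differ (after this identification) by a term whose operator norm is controlled by $\|g-g_0\|_{C^1}$, so by Kato perturbation theory the isolated eigenvalues of $D^g$ depend continuously on $g$. Hence the set $\{g \in \mathcal{R}(M,U_p(r),g) : \dim \ker D^g \leq a(M)\}$ is $C^1$-open; by the lower bound \eqref{lower.bound} it coincides with $\mathcal{R}_{\rm min}(M,U_p(r),g)$.

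\textbf{Density.} Let $g_0 \in \mathcal{R}(M,U_p(r),g)$ and let $\mathcal{V}$ be a $C^k$-neighborhood of $g_0$. Set $d := \dim \ker D^{g_0}$; if $d = a(M)$ we are done. Otherwise, the plan is to produce $g_1 \in \mathcal{V}$ with $\dim \ker D^{g_1} < d$, and iterate this step at most $d - a(M)$ times. Note that Theorem \ref{main_thm} alone is insufficient for density, since the $D$-minimal metric it produces can be arbitrarily far from $g_0$ on $U_p(r)$; genuinely small perturbations are needed.

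To build $g_1$, I would pick a small ball $B \subset U_p(r)$ and consider families $g_t = g_0 + t h$, where $h$ is a smooth symmetric $2$-tensor supported in $B$, rescaled so that $g_t \in \mathcal{V}$ for small $|t|$. First-order perturbation theory reduces the question of how the zero eigenvalues move under $t$ to studying a symmetric bilinear form $Q_h$ on $\ker D^{g_0}$ coming from the derivative $\dot D_h$ of the Dirac operator family. The task is then to find $h$ supported in $B$ such that $Q_h$ has rank strictly exceeding the ``forced'' excess of $d$ over $a(M)$. This is possible because (i) weak unique continuation (Aronszajn) ensures that no nontrivial element of $\ker D^{g_0}$ vanishes on the open set $B$, so restriction to $B$ is injective on $\ker D^{g_0}$, and (ii) metric variations supported in $B$ supply a rich enough family of operators $\dot D_h$ to separate these restrictions, subject to compatibility with the chirality grading (in $n \equiv 0,4 \pmod 8$) or the quaternionic structure (in $n \equiv 1,2 \pmod 8$) that enforces the residual multiplicity $a(M)$.

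I expect the main obstacle to be this non-degeneracy claim for $Q_h$; however, it is essentially the content of the generic-metric theorems in \cite{Baer_Dahl_02} and \cite{Ammann_Dahl_Humbert_09}, in which the relevant perturbations are already constructed with arbitrarily small support, so the restriction of the support to $B \subset U_p(r)$ introduces no new difficulty and the localized result follows.
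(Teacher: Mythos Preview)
Your openness argument matches the paper's. For density you take a different and harder route, and in doing so you miss the trick that lets Theorem~\ref{main_thm} do all the work.

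The paper's argument: Theorem~\ref{main_thm} supplies \emph{one} $D$-minimal metric $g_* \in \mathcal{R}(M, U_p(r), g)$. For an arbitrary $g_1$ in this space, the segment $g_t = (1-t)g_* + t g_1$ stays in $\mathcal{R}(M, U_p(r), g)$, since both endpoints agree with $g$ outside $U_p(r)$ and hence so does every convex combination. The eigenvalues of $D^{g_t}$ are real-analytic in $t$, and the kernel is minimal at $t=0$; therefore it is minimal for all but a discrete set of $t \in [0,1]$, in particular for $t$ arbitrarily close to $1$. Thus Theorem~\ref{main_thm} combined with one-parameter analyticity already gives density --- your remark that Theorem~\ref{main_thm} ``alone is insufficient for density'' is correct only in that analyticity is the missing ingredient, not a fresh perturbation construction.

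Your alternative --- reducing $\dim\ker$ step by step via a first-order local perturbation and the form $Q_h$ --- may well succeed, but the justification you give has a gap: \cite{Baer_Dahl_02} and \cite{Ammann_Dahl_Humbert_09} do \emph{not} construct such perturbations. Those papers produce $D$-minimal metrics by surgery and bordism, and then deduce genericity by exactly the analytic-family argument above, not by proving non-degeneracy of any $Q_h$. The perturbation-theoretic route you sketch is closer to Maier~\cite{Maier_97}; to close your argument you would need to verify that his variation formula and the ensuing rank estimate for $Q_h$ go through with $h$ supported in a prescribed ball in $U_p(r)$, in every dimension $n\geq 3$ and compatibly with the chirality/quaternionic constraints you mention. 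That is doable, but it is genuine additional work, whereas the paper's proof is two lines once Theorem~\ref{main_thm} is in hand.
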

The proof follows \cite[Theorem 1.2]{Anghel_96} or 
\cite[Proposition 3.1]{Maier_97}. The first observation of the
argument is that the eigenvalues of $D^g$ are continuous functions of
$g$ in the $C^1$-topology, from which the property of being open
follows. The second observation is that spectral data of $D^{g_t}$
for a linear family of metrics $g_t = (1-t) g_0 + t g_1$ depends real
analytically on the parameter $t$. If 
$g_0 \in \mathcal{R}_{\rm min}(M, U_{p}(r), g)$ it follows that
metrics arbitrarily close to $g_1$ are also in this set, from which 
we conclude the property of being dense.

\section{Preliminaries}

\subsection{Spin manifolds and spin structure preserving maps}

An orientation on an $n$-dimensional manifold $M$ can be viewed as a
refinement of the frame bundle $\GL(M)$ for the tangent bundle $TM$ to
a sub-bundle $\GL_+(M)$ with structure group $\GL_+(n,\mR)$. Such a
refinement exists if and only if the first Stiefel-Whitney class
$w_1(TM)$ vanishes. Here the group $\GL_+(n,\mR)$ consists of all
invertible $n \times n$-matrices with positive determinant and has
fundamental group $\mZ$ if $n=2$ and $\mZ/2\mZ$ if $n \geq 3$. Let 
$\widetilde{\GL_+}(n,\mR)$ be the unique connected double cover of
$\GL_+(n,\mR)$.

A (topological) spin structure on an oriented manifold $M$ is a 
($\widetilde{\GL_+}(n,\mR) \to \GL(n,\mR)$)-equivariant lift of
$\GL_+(M)$ to a bundle with structure group
$\widetilde{\GL_+}(n,\mR)$. Such a lift exists if and only if the
second Stiefel-Whitney class $W_2(TM)$) vanishes.

If these structures exist they are in general not unique, the
orientation can be chosen independently on each connected
component of $M$, or equivalently the space of orientations on $M$ is
an affine space for the $\mZ/2\mZ$-vector space $H^0(M,\mZ/2\mZ)$.
Similarly, the space of spin structures is an affine space 
for the $\mZ/2\mZ$-vector space $H^1(M,\mZ/2\mZ)$.

As already mentioned we use the term ``spin manifold'' for a manifold 
together with the choice of an orientation and a spin structure. 

If $f:M_1 \to M_2$ is a diffeomorphism between two manifolds, any
orientation and spin structure on $M_2$ pulls back to an orientation
and spin structure on $M_1$. A~diffeomorphism $f$ between two spin
manifolds $M_1$ and $M_2$ is called a spin structure preserving
diffeomorphism if the orientation and spin structure on $M_1$ coincide
with the pullbacks from $M_2$.

If the manifold $M$ is further equipped with a Riemannian metric the
above topological spin structure reduces to a geometrical
spin structure which is a ($\Spin(n) \to \SO(n)$)-equivariant lift
$\Spin(M)$ of the bundle $\SO(M)$ of oriented orthonormal frames of
the tangent bundle. The spinor bundle $\Si M$ on $M$ is a vector
bundle associated to $\Spin(M)$, it has a natural first order elliptic
operator $D:\Gamma(\Sigma M) \to \Gamma(\Sigma M)$, see for example 
\cite{friedrich_00} for details. Any spin structure preserving
diffeomorphism $f:M_1 \to M_2$ which is also an isometry induces an
isomorphism between the spinor bundles $f_*:\Si M_1 \to \Si M_2$ which
is compatible with the Dirac operators in the sense that all sections
$\phi$ of $\Si M_1$ satisfy $D^{M_2}(f_* \circ \phi \circ f^{-1}) 
= f_* \circ (D^{M_1}\phi)\circ f^{-1}$.

If $W$ is a manifold with boundary $\pa W = M$, then an orientation
and spin structure on $W$ induce an orientation and a spin structure
on $M$. Conversely, if an orientation and a spin structure on $M$ are
given, then there is a unique orientation and spin structure on 
$W = M \times [0,1]$ such that the restricted structures on 
$M \cong M \times \{1\}$ coincide with the given ones. The boundary
component $M\times \{0\}$ is obviously diffeomorphic to $M$ as well,
but the restriction of the orientation of $M \times [0,1]$ is the
opposite of the orientation of $M$. We write 
$-M \definedas M \times \{0\}$ for the spin manifold with this
opposite orientation and the spin structure obtained from 
$M \times [0,1]$.

\subsection{The invertible double}

Let $N$ be a compact connected spin manifold with boundary. The double
of $N$ is formed by gluing $N$ and $-N$ along the common boundary 
$\partial N$ and is denoted by $(-N) \cup_{\partial N} N$. If $N$ is
equipped with a Riemannian metric which has product structure near the
boundary, then this metric naturally gives a metric on 
$(-N) \cup_{\partial N} N$. The spin structures can be glued together
to obtain a spin structure on $(-N) \cup_{\partial N} N$. The spinor
bundle  of $(-N) \cup_{\partial N} N$ is obtained by gluing the spinor
bundle of $N$ with the spinor bundle of $-N$ along their common
boundary $\partial N$. It is straightforward
to check that the appropriate gluing map is the map used in 
\cite[Chapter 9]{Booss_Wojciechowski_93}.

The Dirac operator on $(-N) \cup_{\partial N} N$ is invertible 
due to the following argument. Assume that a spinor field $\phi$ is in
the kernel of the Dirac operator on $(-N) \cup_{\partial N} N$. The
restriction $\phi|_{-N}$ can be ``reflected along $\pa N$'' to a
spinor field $\tilde \phi$ on $N$ as indicated in the appendix. On the
boundary $\pa N$ one has $\ti\phi|_N=\nu\cdot \phi|_N$ and thus 
$\nu\cdot \ti\phi|_N=-\phi|_N$ for the exterior unit normal
field~$\nu$ on $\pa N$, see Lemma~\ref{lemma.a}. Green's formula for
the Dirac operator yields 
\begin{equation*}
0 = 
\int_N \<D\ti \phi, \phi\> - \int_N \<\ti\phi,D\phi\>
= \int_{\pa N} \<\nu\cdot \ti\phi,\phi\>
= -\|\phi|_{\pa N}\|_{L^2(\pa N)}^2.
\end{equation*}
Thus $\phi|_{\pa N}= 0$, and by the weak unique continuation property
of the Dirac operator it follows that $\phi = 0$. For more details on
this argument see \cite[Chapter 9]{Booss_Wojciechowski_93} and 
\cite[Proposition 1.4]{Booss_Lesch_08}.
In  the appendix we also show that the doubling construction of
\cite[Chapter 9]{Booss_Wojciechowski_93} coincides with the spinor
bundle and Dirac operator on the doubled manifold.


\begin{prop} \label{prop_double}
Let $(M,g)$ be a compact connected Riemannian spin manifold. 
Let $p \in M$ and $r > 0$. Let $(-M) \# M$ be the connected sum formed
at the points $p \in M$ and $p \in -M$. Then there is a metric on 
$(-M) \# M$ with invertible Dirac operator which coincides with $g$
outside $U_{p}(r)$
\end{prop}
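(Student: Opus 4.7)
The plan is to realize the connected sum $(-M)\#M$ as a geometric double and then invoke the invertible double construction recalled just above the statement. Concretely, I would choose some $\epsilon\in(0,r)$ so that $U_p(\epsilon)$ is a normal coordinate ball, set $N\definedas M\setminus U_p(\epsilon)$, arrange a metric on $N$ which is a product in a collar of $\partial N$ while still agreeing with $g$ outside $U_p(r)$, and then form the doubled metric on $(-N)\cup_{\partial N} N$, which will be identified with $(-M)\#M$.

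The main geometric step is the metric modification. I would pick $\epsilon<\epsilon_1<\epsilon_2<r$ and produce a smooth metric $g'$ on $M$ with $g'=g$ on $M\setminus U_p(\epsilon_2)$ and $g'=dt^2+h$ on the shell $U_p(\epsilon_1)\setminus \overline{U_p(\epsilon)}$, where $t$ is the distance to $\partial U_p(\epsilon)$ and $h$ is a fixed metric on $\partial U_p(\epsilon)$ (for instance the round metric of radius $\epsilon$). Between $U_p(\epsilon_1)$ and $U_p(\epsilon_2)$ one smoothly interpolates between these two symmetric $(0,2)$-tensors via a cutoff function; for $\epsilon$ sufficiently small the interpolant remains positive definite, so this defines a genuine Riemannian metric. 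By construction $g'|_N$ is a product in a collar of $\partial N$ and equals $g$ on $M\setminus U_p(r)$.

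With this in place, the double $(-N)\cup_{\partial N} N$ inherits a smooth Riemannian metric (the product structure near $\partial N$ is precisely what guarantees smoothness across the gluing hypersurface), together with the orientation and spin structure discussed in the text preceding the statement. The weak unique continuation argument recalled there then forces the Dirac operator of the doubled metric to be invertible. Finally, $(-N)\cup_{\partial N} N$ is canonically diffeomorphic as a spin manifold to the connected sum $(-M)\#M$ based at $p$, since both are obtained from one copy of $M\setminus U_p(\epsilon)$ and one copy of $-M\setminus U_p(\epsilon)$ glued along their common boundary sphere with matching orientations and spin structures. Transporting the doubled metric via this identification yields the required metric on $(-M)\#M$. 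The only non-routine ingredient is the invertible double construction itself, which is already at our disposal; the metric interpolation is a standard cutoff argument and presents no real obstacle.
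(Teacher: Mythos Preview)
Your proposal is correct and follows exactly the approach the paper takes: deform $g$ on an annulus inside $U_p(r)$ so that $N=M\setminus U_p(\epsilon)$ (the paper uses $\epsilon=r/2$) has a product metric near $\partial N$, form the double, and invoke the invertible double construction. Your write-up supplies the routine details the paper omits; the only superfluous point is the smallness condition on $\epsilon$ for positive definiteness, since a convex combination of Riemannian metrics is automatically a Riemannian metric.
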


This Proposition is proved by applying the double construction to the
manifold with boundary $N = M \setminus U_{p}(r/2)$, where
$N$ is equipped with a metric we get by deforming the metric $g$ on 
$U_{p}(r) \setminus U_{p}(r/2)$ to become product near the boundary.

Metrics with invertible Dirac operator are obviously $D$-minimal, so
the metric provided by Proposition \ref{prop_double} is $D$-minimal.

\section{Proof of Theorem \ref{main_thm}}

Let $M$ and $N$ be compact spin manifolds of dimension $n$. Recall
that a spin bordism from $M$ to $N$ is a manifold with boundary $W$ of
dimension $n+1$ together with a spin structure preserving
diffeomorphism from $N \amalg (-M)$ to the boundary of $W$. The
manifolds $M$ and $N$ are said to be spin bordant if such a bordism
exists. 

For the proof of Theorem \ref{main_thm} we have to distinguish
several cases.

\subsection{Proof of Theorem \ref{main_thm} in dimension  $n \geq 5$}

\begin{proof}

To prove the Gromov-Lawson conjecture, Stolz \cite{Stolz_92}
showed that any compact spin manifold with vanishing index is spin
bordant to a manifold of positive scalar curvature. Using this we see 
that $M$ is spin bordant to a manifold $N$ which has a $D$-minimal
metric $h$, where the manifold $N$ is not necessarily connected. For
details see \cite[Proposition 3.9]{Baer_Dahl_02}.

\begin{center}
\includegraphics[width=4cm]{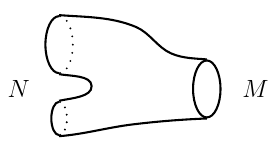}\kern1.6cm 
\end{center}

By removing an open ball from the interior of a spin bordism from $M$
to $N$ we get that $N \amalg (-M)$ is spin bordant to the sphere
$S^n$. 

\begin{center}
\includegraphics[width=6cm]{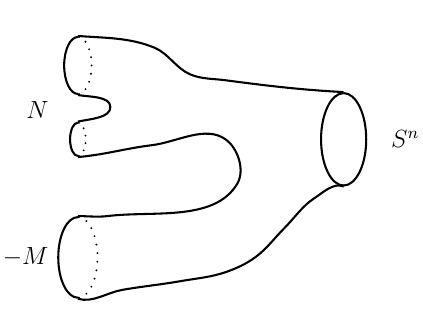} 
\end{center}

Since $S^n$ is simply connected and $n \geq 5$ it follows from
\cite[Proof of Theorem 4.4, page 300]{Lawson_Michelsohn_89} that 
$S^n$ can be obtained from $N \amalg (-M)$ by a sequence of surgeries 
of codimension at least $3$. By making $r$ smaller and possibly move
the surgery spheres slightly we may assume that no surgery hits 
$U_{p}(r) \subset M$. We obtain a sequence of manifolds 
$N_0, N_1, \dots, N_k$, where $N_0 = N \amalg (-M)$, $N_k = S^n$, 
and $N_{i+1}$ is obtained from $N_i$ by a surgery of codimension at 
least $3$.

\begin{center}
\includegraphics[width=6cm]{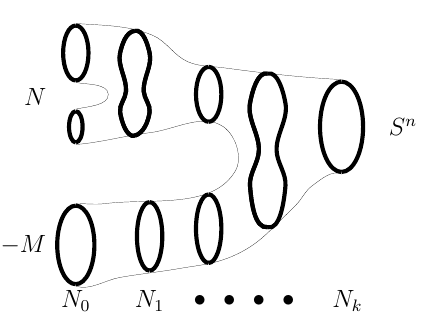} 
\end{center}

Since the surgeries do not hit $U_{p}(r) \subset M \subset N
\amalg (-M) = N_0$ we can consider $U_p(r)$ as a subset of every
$N_i$. We define the sequence of manifolds $N'_0, N'_1, \dots,
N'_k$ by forming the connected sum $N'_i = M \# N_i$ at the points
$p$. Then $N'_0 = N \amalg (-M) \# M$, $N'_k = S^n \# M = M$, and 
$N'_{i+1}$ is obtained from $N'_i$ by a surgery of codimension at 
least $3$ which does not hit $M \setminus U_{p}(r)$.

\begin{center}
\includegraphics[width=7cm]{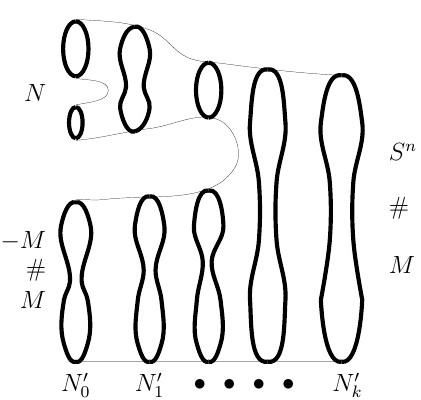} 
\end{center}

We now equip $N'_0$ with a Riemannian metric. On $N$ we choose a
$D$-minimal metric. The manifold $(-M) \# M$ has vanishing index, 
so a $D$-minimal metric is a metric with invertible Dirac operator. 
From Proposition \ref{prop_double} we know that there exists such a
metric on $(-M) \# M$ which coincides with $g$ outside
$U_{p}(r)$. Note that here we use the assumption that $M$ is
connected. Together we get a $D$-minimal metric $g'_0$ on $N'_0$.

From \cite[Proposition 3.6]{Baer_Dahl_02} we know that the property of
being $D$-minimal is preserved under surgery of codimension at least
$3$. We apply the surgery procedure to~$g'_0$ to produce a sequence of
$D$-minimal metrics $g'_i$ on $N'_i$. Since the surgery procedure of 
\cite[Theorem 1.2]{Baer_Dahl_02} does not affect the Riemannian
metrics outside arbitrarily small neighborhoods of the surgery spheres
we may assume that all $g'_i$ coincide with~$g$ on 
$M \setminus U_{p}(r)$. The Theorem is proved by choosing 
$\tilde{g} = g'_k$ on $N'_k = M$.
\end{proof}

\subsection{Proof of Theorem \ref{main_thm} in dimensions  $n=3$ 
and $n=4$} 

\begin{proof}
In these cases the argument
works almost the same, except that we can only conclude that $S^n$ is
obtained from $N \amalg (-M)$ by surgeries of codimension at least $2$,
see \cite[VII, Theorem 3]{Kirby_89} for $n=3$ and 
\cite[VIII, Proposition 3.1]{Kosinski_93} for $n=4$. To take care of 
surgeries of codimension $2$ we use 
\cite[Theorem 1.2]{Ammann_Dahl_Humbert_09}. Since this surgery
construction affects the Riemannian metric only in a small
neighborhood of the surgery sphere we can finish the proof as
described in the case $n\geq 5$.
\end{proof}
  
Alternatively, it is straight-forward to adapt the perturbation proof
by Maier~\cite{Maier_97} to prove Theorem \ref{main_thm} in dimensions
$3$ and $4$.

\subsection{Proof of Theorem \ref{main_thm} in dimension  $n=2$}

\begin{proof}
The argument in the case $n=2$ is different. Assume that a metric $g$
on a compact surface with chosen spin structure is given. 
In~\cite[Theorem 1.1]{Ammann_Dahl_Humbert_09} it is shown that for any
$\ep>0$ there is a $D$-minimal metric $\hat g$ with  
$\|g - \hat g\|_{C^1}<\ep$. Using the following Lemma \ref{dim2.lem},
we see that for $\ep>0$ sufficiently small, there is a spin structure 
preserving diffeomorphism $\psi:M\to M$ 
such that $\ti g \definedas \psi^*\hat g$
is conformal to $g$ on $M\setminus U_p(r)$. As the dimension of the
kernel of the Dirac operator is preserved under spin structure
preserving conformal diffeomorphisms, $\ti g$ is $D$-minimal as well.
\end{proof}

\begin{lemma}\label{dim2.lem}
Let $M$ be a compact surface with a Riemannian metric $g$ and a spin 
structure. Then for any $r>0$ there is an $\ep>0$ with the following 
property: For any $\hat g$ with $\|g - \hat g\|_{C^1}<\ep$ there is a
spin structure preserving diffeomorphism $\psi:M\to M$ such that 
$\ti g \definedas \psi^*\hat g$ is conformal to $g$ on
$M\setminus U_p(r)$.
\end{lemma}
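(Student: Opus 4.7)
The plan is to invoke the implicit function theorem for the equation
\[
F(\psi,\hat g)\definedas \bigl(\psi^{*}\hat g\bigr)^{\mathrm{tf}}\Big|_{M\setminus U_p(r)} \;=\; 0,
\]
where $(\cdot)^{\mathrm{tf}}$ denotes the $g$-trace-free part of a symmetric $(0,2)$-tensor (a metric is pointwise conformal to $g$ exactly where its $g$-trace-free part vanishes). At $(\mathrm{id},g)$ the equation holds, and linearising in $\psi$ via the time-one flow of a vector field $X$ yields the restriction to $M\setminus U_p(r)$ of the conformal Killing operator $P X\definedas (\cL_X g)^{\mathrm{tf}}$.

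The crucial step is surjectivity of $X\mapsto PX|_{M\setminus U_p(r)}$ onto smooth trace-free sections. Globally on the closed Riemann surface $(M,[g])$, $P$ is elliptic with finite-dimensional $L^{2}$-cokernel identified with the space $\mathcal{Q}$ of real parts of holomorphic quadratic differentials (equivalently, trace-free symmetric tensors $h$ with $\divv_{g}h=0$). Given a smooth trace-free $h$ on $M\setminus U_p(r)$, I would first extend $h$ smoothly to $\tilde h$ over $M$, and then correct $\tilde h$ by a smooth tensor supported in $U_p(r/2)$ so that the corrected $\tilde h$ is $L^{2}$-orthogonal to $\mathcal{Q}$. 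This correction is available because by unique continuation no nonzero element of $\mathcal{Q}$ can vanish on the open set $U_p(r/2)$, so pairings against smooth tensors compactly supported there realise every vector in $\mathcal{Q}^{*}$. The Fredholm alternative then produces a smooth $X$ on $M$ with $PX=\tilde h$, so $(\cL_X g)^{\mathrm{tf}}=h$ on $M\setminus U_p(r)$.

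With this surjectivity in hand, the implicit function theorem in a H\"older scale $C^{k,\alpha}$ ($k\geq 1$) yields, for every $\hat g$ sufficiently close to $g$ in $C^{1}$, a diffeomorphism $\psi$ close to the identity solving $F(\psi,\hat g)=0$. Since the constraint $(\psi^{*}\hat g)^{\mathrm{tf}}=0$ on $M\setminus U_p(r)$ is a Beltrami/Cauchy--Riemann-type system for $\psi$, elliptic regularity upgrades $\psi$ to be smooth there; over $U_p(r)$ one may smoothly adjust $\psi$ without disturbing the constraint. Finally, a diffeomorphism $C^{1}$-close to the identity is automatically isotopic to it and hence spin-preserving.

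The delicate point is the surjectivity step, specifically arranging the compactly supported correction in $U_p(r/2)$; this reduces to unique continuation for holomorphic quadratic differentials, which is classical on Riemann surfaces. The remaining ingredients---ellipticity of $P$, identification of $\ker P^{*}$ with $\mathcal{Q}$, the Banach-space IFT, and elliptic bootstrapping to smoothness---are routine, though care must be taken in choosing the function-space setup so that the $C^{1}$-smallness hypothesis suffices to trigger the IFT.
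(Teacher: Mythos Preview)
Your argument is correct and rests on the same key fact as the paper: a transverse--traceless tensor on a closed surface is (the real part of) a holomorphic quadratic differential, and hence vanishes identically as soon as it vanishes on the open set $U_p(r)$. The paper packages this differently. Rather than setting up an implicit function theorem for the map $(\psi,\hat g)\mapsto(\psi^*\hat g)^{\mathrm{tf}}|_{M\setminus U_p(r)}$ with infinite-dimensional target, it passes to the quotient $\cM$ of metrics by spin-preserving conformal diffeomorphisms and shows that $\Phi:\mathcal{R}(M,U_p(r),g)\to\cM$ is a submersion at $g$. Since $T_{[g]}\cM\cong S^{TT}$ is \emph{finite dimensional}, surjectivity of $d\Phi$ is checked by the dual argument: any $h_0\in S^{TT}$ orthogonal to all variations supported in $U_p(r)$ must vanish on $U_p(r)$, hence everywhere by holomorphicity. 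This is exactly your unique-continuation step, but the finite-dimensional target spares one the Banach-space machinery (choice of H\"older scale, bounded right inverse, elliptic bootstrapping) that you flag as requiring care. Your formulation, on the other hand, is more self-contained in that it does not appeal to the smooth structure of the Teichm\"uller/moduli space, and it makes the role of the conformal Killing operator $P$ and its Fredholm cokernel $\mathcal{Q}$ explicit.
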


To prove the lemma one has to show that a certain differential is
surjective. This proof can be carried out in different mathematical
languages. One alternative is to use Teichm\"uller theory formulated
in terms of quadratic differentials, we will use a presentation in
terms of Riemannian metrics following \cite{Tromba_92}.

\begin{proof}[Sketch of Proof of Lemma \ref{dim2.lem}]
If $g_1$ and $g_2$ are metrics on $M$, then we say that $g_1$ is 
Teichm\"uller equivalent to $g_2$ if there is a diffeomorphism 
$\psi: M \to M$ such that~$\psi$ is
homotopic to the identity and $\psi^* g_2$ is conformal to $g_1$. 
This is an equivalence relation on the set of metrics on $M$, and the
equivalence class of $g_1$ is denoted by $\Phi(g_1)$.  Let $\cT$ be
the set of equivalence classes, this is the Teichm\"uller space which
has a natural structure of a smooth finite-dimensional
manifold. 
Note that any diffeomorphism $\psi: M\to M$ homotopic to the identity is also 
isotopic to the identity, 
i.e.\ the homotopy can be chosen as a path in the diffeomorphism
group, see e.g. \cite{earle.eells:67}. As along this path, 
the spin structure is preserved, $\psi$ perserves the spin structure.

Showing the lemma is thus equivalent to showing that 
$\Phi(\mathcal{R}(M ,U_{p}(r),g))$ is a neighborhood of $\Phi(g)$ in
$\cT$. 

Variations of metrics are given by symmetric $(2,0)$-tensors, that is
by sections of $S^2 T^*M$. The tangent space of $\cT$ can be
identified with the space of transverse (= divergence free) traceless 
sections,
\begin{equation*}
S^{TT}
\definedas
\{ h \in \Gamma(S^2 T^*M) \mid \divv^g h=0, \tr^g h=0 \},
\end{equation*}
see for example \cite[Lemma 4.57]{Besse_87} and \cite{Tromba_92}.

The two-dimensional manifold $M$ has a complex structure which is
denoted by~$J$. The map $H: T^*M \to S^2 T^*M$ defined by 
$H(\al) \definedas \al \otimes \al - \al\circ J \otimes \al\circ J$ 
is quadratic, it is $2$-to-$1$ outside the zero section, and its image
are the trace free symmetric tensors. Furthermore 
$H(\al \circ J) = -H(\al)$. Hence by polarization we obtain an
isomorphism of real vector bundles from $T^*M \otimes_\mC T^*M$ to the
trace free part of $S^2 T^*M$. Here the complex tensor product is used
when $T^* M$ is considered as a complex line bundle using $J$. A trace
free section of $S^2 T^*M$ is divergence free if and only if the
corresponding section $T^*M \otimes_\mC T^*M$ is holomorphic, see  
\cite[pages 45-46]{Tromba_92}. We get that $S^{TT}$ is
finite-dimensional, and it follows that $\cT$ is finite dimensional. 

In order to show that $\Phi(\mathcal{R}(M ,U_{p}(r),g))$ is a
neighborhood of $\Phi(g)$ in $\cT$ we show that the differential
$d\Phi: T\mathcal{R}(M ,U_{p}(r),g) \to T\cT$ is surjective at $g$.
Using the above identification $T\cT = S^{TT}$, $d\Phi$ is just
orthogonal projection from $\Gamma(S^2 T^*M)$ to $S^{TT}$.

Assume that $h_0 \in S^{TT}$ is orthogonal to 
$d\Phi(T\mathcal{R}(M ,U_{p}(r),g))$. Then $h_0$ is $L^2$-orthogonal
to $T\mathcal{R}(M ,U_{p}(r),g)$.
As $T\mathcal{R}(M ,U_{p}(r),g))$ consists of all sections of 
$S^2 T^*M$ with support in $U_{p}(r)$ we conclude that $h_0$ vanishes
on  
$U_{p}(r)$. Since $h_0$ can be identified with a holomorphic section 
of $T^*M \otimes_\mC T^*M$ we see that $h_0$ vanishes everywhere on
$M$. The surjectivity of $d\Phi$ and the lemma follow.
\end{proof}


\appendix

\section{Notes about reflections at hypersurfaces and the doubling
  construction} 

Let $M$ be a connected Riemannian spin manifold, with a reflection 
$\phi$ at a hyperplane $N$. That is $\phi$ is an isometry with 
fixed point set $N$, orientation reversing, and
$N$ separates $M$ into two components. Let $-M$ be the manifold $M$ 
with the opposite orientation, i.e. $\phi:M\to -M$ is orientation
preserving. It is also required that $\phi$ preserves the spin structure. 
The reflection $\phi$ lifts to the frame bundle by mapping the frame 
$\cE=(e_1,\ldots,e_n)$ to 
$\phi_*\cE\definedas(-d\phi(e_1),d\phi(e_2),\ldots,d\phi(e_n))$, 
so $\phi_*:\SO(M)\to \SO(M)$. 
This map $\phi_*$ is not $\SO(n)$ equivariant, but if we define
$J=\diag(-1,1,1,1,...1)$, then
\begin{equation*}
\phi_*(\cE A)
=
\phi_*(\cE)JAJ.
\end{equation*}
If $\cE$ is a frame over $N$ whose first vector is normal to $N$, 
then $\phi_*(\cE)=\cE$.

The above mentioned compatibility with the spin structure is the fact
that the pullback of the double covering $\th:\Spin(M)\to \SO(M)$ via 
$\phi_*$ is again the covering $\Spin(M)\to \SO(M)$. In other words, a
lift $\witi\phi_*:\Spin(M)\to \Spin(M)$ can be chosen
such that $\th\circ \witi\phi_*=\phi_*\circ \th$.
This implies that $(\witi\phi_*)^2=\pm \Id$.
Choose $\witi\cE\in \Spin(M)$ over $N$, such that the first vector of 
$\th(\ti\cE)$ is normal to $N$. Then $\witi\phi_*(\witi\cE)=\pm
\witi\cE$, thus $(\witi\phi_*)^2(\witi\cE) = \witi\cE$. It follows that 
$(\witi\phi_*)^2=\Id$. 

The conjugation with $J$ is an automorphism of $\SO(n)$ and lifts to  
$\Spin(n) \subset \Cl_n$, as a conjugation with $E_1 \definedas
(1,0...,0)$ in the Clifford algebra sense. We therefore have
\begin{equation*}
\witi\phi_*(\witi\cE B)
=  
\witi\phi_*(\witi\cE)(-E_1BE_1).
\end{equation*}

Let $\si:\Cl_n\to \End(\Si_n)$ be an irreducible representation 
of the Clifford algebra.
$\Si M\definedas\Spin(M)\times_\si \Si_n$.

\begin{lemma}[Lift to the spinor bundle]\label{lemma.a}
The map 
\begin{equation*}
\Spin(M) \times \Si_n \ni 
(\witi\cE,\rho) \mapsto (\witi\phi_*\witi\cE,\si(E_1)\rho)
\in \Spin(M) \times \Si_n
\end{equation*} 
is compatible with the equivalence relation given by $\si$. Thus
it descends to a map 
\begin{equation*}
\phi_\#:\Si M= \Spin(M)\times_\si \Si_n
\to 
\Si M= \Spin(M)\times_\si \Si_n.
\end{equation*}
\end{lemma}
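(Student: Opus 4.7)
The plan is to unwind the definition of the associated bundle and show directly that the proposed map respects the defining equivalence relation. Recall that $\Spin(M)\times_\si \Si_n$ is the quotient of $\Spin(M)\times \Si_n$ by the relation $(\witi\cE\cdot b,\rho)\sim(\witi\cE,\si(b)\rho)$ for $b\in\Spin(n)$. So the task reduces to checking that the two images
\[
(\witi\phi_*(\witi\cE\cdot b),\,\si(E_1)\rho)
\quad\text{and}\quad
(\witi\phi_*\witi\cE,\,\si(E_1)\si(b)\rho)
\]
represent the same class in the associated bundle.

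First I would apply the transformation rule established just above the lemma, namely $\witi\phi_*(\witi\cE\cdot b)=\witi\phi_*(\witi\cE)\cdot(-E_1bE_1)$, where the conjugation $b\mapsto -E_1bE_1$ is the lift of conjugation by $J$ to $\Spin(n)$ in the Clifford algebra sense. Substituting, the first pair becomes
\[
\bigl(\witi\phi_*(\witi\cE)\cdot(-E_1bE_1),\,\si(E_1)\rho\bigr),
\]
and by the defining equivalence it is identified with
\[
\bigl(\witi\phi_*\witi\cE,\,\si(-E_1bE_1)\,\si(E_1)\,\rho\bigr).
\]
It therefore suffices to verify the Clifford-algebraic identity $\si(-E_1bE_1)\,\si(E_1)=\si(E_1)\,\si(b)$, which I would reduce to the computation $(-E_1bE_1)\cdot E_1=-E_1 b E_1^2=E_1 b$ using $E_1^2=-1$.

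A small preliminary check I would include is that $-E_1bE_1$ indeed lies in $\Spin(n)\subset \Cl_n$, so that the right multiplication in $\Spin(M)$ is well-defined; this follows from the remark that conjugation by $J$ is an automorphism of $\SO(n)$ which lifts to $\Spin(n)$ as conjugation by $E_1$, the extra sign coming from $E_1^2=-1$ and not affecting membership in $\Spin(n)$.

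The computation is essentially formal once the conventions are fixed, so the main obstacle is bookkeeping of signs: one must consistently use $E_1^2=-1$ in $\Cl_n$ and track that the sign ambiguity $(\widetilde\phi_*)^2=\pm\Id$ has been pinned down to $+\Id$ (as done earlier in the appendix). Once these conventions are settled, compatibility with the equivalence relation is immediate, and $\phi_\#$ descends to a well-defined map on $\Spin(M)\times_\si\Si_n$, as claimed.
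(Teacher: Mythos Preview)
Your proposal is correct and follows essentially the same route as the paper: apply the transformation rule $\witi\phi_*(\witi\cE\,b)=\witi\phi_*(\witi\cE)(-E_1bE_1)$ and then reduce to the Clifford identity $(-E_1bE_1)E_1=E_1b$ coming from $E_1^2=-1$. The paper's proof is a terse one-line version of exactly this computation; your added checks (that $-E_1bE_1\in\Spin(n)$, and the remark on $(\witi\phi_*)^2=\Id$) are harmless, though the latter is not actually needed for the well-definedness claim itself.
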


\begin{proof}
 $(\witi\cE B,\si^{-1}(B)\rho)$ is mapped to 
\begin{equation*}
(\witi\phi_*(\witi\cE B),\si(E_1)\si^{-1}(B)\rho)
=  
\witi\phi_*(\witi\cE)(-E_1BE_1), \si((-E_1BE_1)^{-1})\si(E_1)\rho).
\end{equation*} 
\end{proof}

Obviously $(\phi_\#)^2=-\Id$, and $\phi_\#:\Si_pM \to \Si_{\phi(p)}M$.
In even dimensions $\phi_\#$ maps positive spinors to negative ones and vice 
versa.

\begin{lemma}[On the fixed point set $N$]\label{lemma.a}
Assume that $\psi\in \Si M|_N$. Then $\phi_\#(\psi)
= \pm \nu\cdot \psi$ for a unit normal vector $\nu$ of $N$ in $M$. 
The sign depends on the choice of $\nu$ and the choice of the
lift~$\ti\phi_*$.
\end{lemma}

\begin{proof}
Choose $\witi\cE\in \Spin(M)$ over the base point of $\psi$,
such that $\nu$ is the first vector of $\th(\ti\cE)$. Determine $\rho\in\Si_n$
such that $(\witi\cE,\rho)$ represents $\psi$. Then $\phi_\#(\psi)$ is represented
by $(\pm \witi\cE,\nu\cdot \rho)$. 
\end{proof}

\begin{lemma}[Compatibility with the Clifford action]
\begin{equation*}
d\phi(X)\cdot \phi_\#(\psi)
=
-\phi_\#(X\cdot \psi)
\end{equation*}
for $X\in T_pM$, $\psi\in \Si_pM$. 
\end{lemma}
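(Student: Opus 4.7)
The plan is to check the identity fiberwise by writing everything in terms of a spin frame and reducing to a short computation in $\Cl_n$. Fix $p\in M$, choose an oriented orthonormal frame $\cE=(e_1,\ldots,e_n)$ at $p$ and a lift $\witi\cE\in\Spin(M)$. Writing $X = \sum_i x_i e_i\in T_pM$ and $\psi = [\witi\cE,\rho]\in\Si_pM$, the Clifford action on spinors is $X\cdot \psi = [\witi\cE,\sigma(x)\rho]$, where $\sigma(x)\definedas \sum_i x_i\sigma(E_i)$. Applying the definition of $\phi_\#$ then gives
\[
\phi_\#(X\cdot\psi) = [\witi\phi_*\witi\cE,\;\sigma(E_1)\sigma(x)\rho].
\]

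For the other side I would read off $d\phi(X)$ in the frame $\phi_*\cE = (-d\phi(e_1),d\phi(e_2),\ldots,d\phi(e_n))$ at $\phi(p)$. By inspection the coordinate vector of $d\phi(X)$ in this new frame is $Jx$ with $J=\diag(-1,1,\ldots,1)$, because the first basis vector of $\phi_*\cE$ is $-d\phi(e_1)$ while the remaining slots agree with $d\phi(e_i)$. Consequently
\[
d\phi(X)\cdot\phi_\#(\psi) = [\witi\phi_*\witi\cE,\;\sigma(Jx)\,\sigma(E_1)\rho].
\]

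Comparing the two representatives in the associated bundle, the lemma reduces to the algebraic identity $\sigma(Jx)\,\sigma(E_1) = -\sigma(E_1)\,\sigma(x)$ in $\End(\Si_n)$, or equivalently $(Jx)E_1 + E_1 x = 0$ in $\Cl_n$. The $i=1$ contributions cancel since they are $\pm x_1 E_1^2$ with opposite signs, and for each $i\ge 2$ the terms $x_i E_iE_1$ and $x_i E_1 E_i$ cancel by the Clifford anticommutation $E_1E_i+E_iE_1=0$. The main obstacle is purely the bookkeeping of signs: the geometric sign coming from orientation reversal (captured by the $-d\phi(e_1)$ in the definition of $\phi_*\cE$) and the algebraic sign from $E_1^2=-1$ must be aligned with the conjugation rule $\witi\phi_*(\witi\cE B)=\witi\phi_*(\witi\cE)(-E_1 B E_1)$ already established in the paper. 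Once these are tracked consistently, the stated minus sign is forced.
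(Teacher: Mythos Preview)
Your argument is correct and is essentially the paper's own proof: both represent $d\phi(X)$ by the coordinate vector $Jx$ in the frame $\phi_*\cE$, compute $d\phi(X)\cdot\phi_\#(\psi)=[\witi\phi_*\witi\cE,\si(Jx)\si(E_1)\rho]$, and reduce the claim to the Clifford identity $\si(Jx)\si(E_1)=-\si(E_1)\si(x)$. The only cosmetic difference is that the paper packages the algebraic step as the single relation $Jv=E_1\cdot v\cdot E_1$ in $\Cl_n$ (from which $(Jx)E_1=E_1 x E_1 E_1=-E_1 x$ is immediate), whereas you verify the same cancellation term by term.
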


\begin{proof}
We view $T_pM$ as an associated bundle to $\Spin(M)$. Then 
$d\phi([\witi\cE,v])=[\witi\phi_*(\cE), Jv]$. 
Thus
\begin{equation*}
\begin{split}
  d\phi([\witi\cE,v])\cdot \phi_\#([\witi\cE,\rho])
   &= [\witi\phi_*(\witi\cE), \si(Jv)\si(E_1)\rho] \\
   &=[\witi\phi_*(\witi\cE), -\si(E_1)\si(v)\rho]\\
   &= -\phi_\#([\witi\cE,v]\cdot [\witi\cE,\rho]).
\end{split}
\end{equation*}
Here we used that $Jv = E_1 \cdot v \cdot E_1$ in $\Cl_n$.
\end{proof}

\begin{lemma}
Let $X\in T_pM$, $\psi\in \Gamma(\Si M)$. 
Then
\begin{equation*}
\na_{d\phi(X)}\phi_\#(\psi)=\phi_\#(\na_X\psi).
\end{equation*}
\end{lemma}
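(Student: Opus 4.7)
The plan is to compute both sides of the identity in a local spin frame and reduce the statement to the naturality of the Levi--Civita spin connection under the isometry $\phi$. Choose a local spin frame $\witi\cE : U \to \Spin(M)$ on a neighborhood of $p$, write $\psi = [\witi\cE, \rho]$ with $\rho : U \to \Si_n$, and recall that in this frame $\na_X \psi = [\witi\cE,\, X(\rho) + \si(\om(X))\rho]$, where $\om \definedas \witi\cE^{*}\om^{\Spin}$ denotes the pullback of the connection $1$-form on $\Spin(M)$. On $\phi(U)$ introduce the transported frame $\witi\cE'(q) \definedas \witi\phi_{*}(\witi\cE(\phi^{-1}(q)))$; by the definition of $\phi_\#$ the spinor field $\phi_\#(\psi)$ is then locally represented by $\si(E_1)\,\rho\circ\phi^{-1}$ in the frame $\witi\cE'$.

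Applying the connection formula in the frame $\witi\cE'$, the derivative term gives $d\phi(X)(\si(E_1)\,\rho\circ\phi^{-1}) = \si(E_1)(X\rho)\circ\phi^{-1}$ because $\si(E_1)$ is a fixed endomorphism. For the connection term I identify $\om' \definedas (\witi\cE')^{*}\om^{\Spin}$ on vectors of the form $d\phi(X)$ using the chain rule $d\witi\cE' \circ d\phi = d(\witi\phi_{*}) \circ d\witi\cE$, which reduces the task to identifying the pullback $(\witi\phi_{*})^{*}\om^{\Spin}$. The expected formula is
$$
(\witi\phi_{*})^{*}\om^{\Spin} \;=\; -E_1\, \om^{\Spin}\, E_1,
$$
and granting it, $\om'(d\phi(X)) = -E_1\,\om(X)\,E_1$. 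Using $E_1^2 = -1$ in $\Cl_n$ one verifies the Clifford identity $\si(-E_1 A E_1)\,\si(E_1) = \si(E_1)\,\si(A)$, from which
$$
\na_{d\phi(X)}\phi_\#(\psi) = \bigl[\witi\cE',\; \si(E_1)\bigl(X\rho + \si(\om(X))\rho\bigr)\circ\phi^{-1}\bigr] = \phi_\#(\na_X\psi).
$$

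The main obstacle is the transformation law $(\witi\phi_{*})^{*}\om^{\Spin} = -E_1\,\om^{\Spin}\,E_1$. The underlying statement on $\SO(M)$ is $(\phi_{*})^{*}\om^{\SO} = J\,\om^{\SO}\,J$, which holds because $\phi$ is an isometry: $d\phi_{*}$ preserves horizontal subspaces for the Levi--Civita connection, and the twisted equivariance $\phi_{*}(\cE A) = \phi_{*}(\cE)\,JAJ$ implies that the fundamental vector field generated by $\xi \in \so(n)$ is pushed forward to the one generated by $J\xi J$. The statement then lifts through the double cover $\th$: since the chosen spin lift satisfies $\witi\phi_{*}(\witi\cE B) = \witi\phi_{*}(\witi\cE)(-E_1 B E_1)$, conjugation by $E_1$ replaces conjugation by $J$, yielding the required formula.
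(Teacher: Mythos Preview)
Your proof is correct and follows essentially the same route as the paper: both arguments reduce the claim to the transformation law of the Levi--Civita connection $1$-form under $\phi_*$, namely $(\phi_*)^*\om^{\SO}=J\,\om^{\SO}\,J$ on $\SO(M)$, and then lift this to $(\witi\phi_*)^*\om^{\Spin}=-E_1\,\om^{\Spin}\,E_1$ on $\Spin(M)$. Your version spells out in local spin frames how this transformation law yields the identity $\na_{d\phi(X)}\phi_\#(\psi)=\phi_\#(\na_X\psi)$, whereas the paper simply states that the lifted transformation law ``induces'' the relation; the underlying idea is identical.
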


\begin{proof}
The differential of $\phi_*:\SO(M)\to \SO(M)$ maps $T\SO(M)$ to
$T\SO(M)$. The connection $1$-form $\om:\SO(M)\to \so(n)$ then pulls
back according to 
\begin{equation*}
\om((d(\phi_*))(Y)) = J \om(Y) J
\end{equation*}
for $Y\in T_\cE\SO(M)$, a lift of $X\in T_M$ under the projection
$\SO(M)\to M$. We lift this to a connection $1$-form $\ti
\om:\Spin(M)\to \Cl_n$ which thus transforms as
\begin{equation*}
\ti\om ((d(\ti\phi_*))(\ti Y))
= -E_1 \om(\ti Y) E_1
\end{equation*}
where $\ti Y\in T\Spin(M)$ is a lift of $Y$. And this induces the
relation 
\begin{equation*}
\na_{d\phi(X)}\phi_\#(\psi)
= \phi_\#(\na_X\psi).
\end{equation*}
\end{proof}

We obtain 
\begin{equation*}
\begin{split}
\phi_\#(D\psi) 
&= 
\sum_i  \phi_\#(e_i\cdot \na_{e_i}\psi)\\
&=
- \sum_i d\phi(e_i)\cdot \phi_\#(\na_{e_i}\psi)\\
&= 
- \sum_i d\phi(e_i)\cdot\na_{d\phi(e_i)} \phi_\#\psi\\
&=
- D \phi_\#\psi
\end{split}
\end{equation*}

This formula can also be read as
\begin{equation}\label{formula.Dphifis}
D\psi = \phi_{\#} D  \phi_{\#} \psi
\end{equation}
As a conclusion we obtain the following proposition.

\begin{proposition}
If one constructs the double for a manifold with the classical spinor 
bundle and Dirac operator as in
\cite[Theorem~9.3]{Booss_Wojciechowski_93}, then we obtain the
classical spinor bundle and the classical Dirac operator on the
double.
\end{proposition}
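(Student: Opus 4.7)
My approach is to specialize the framework developed in the appendix to the situation where the ambient manifold is the classical double $M\definedas(-N)\cup_{\partial N}N$ and $\phi:M\to M$ is the reflection exchanging the two halves along $\partial N$. The whole plan rests on the observation that once $\phi$ and its spin lift $\witi\phi_*$ are in place, the three preceding lemmas already supply every structural property one needs; the remaining task is just to recognise the Booss--Wojciechowski data inside them.

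First I would check that $M$ carries a canonical smooth, oriented, Riemannian structure for which $\phi$ is an orientation-reversing isometry fixing $\partial N$, and that the spin structures on $N$ and on $-N$ glue to a spin structure on $M$ with respect to which $\phi$ is spin-compatible in the sense of the appendix. Then the three preceding lemmas apply verbatim and produce, on the classical spinor bundle $\Sigma M$, a fibrewise map $\phi_\#$ covering $\phi$ together with the Clifford and connection compatibilities and the relation \eqref{formula.Dphifis}.

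Second I would use $\phi_\#$ to encode a section $\psi\in\Gamma(\Sigma M)$ as a pair of sections of $\Sigma N$, namely $\psi_+\definedas\psi|_N$ and $\psi_-\definedas \phi_\#(\psi)|_N$, where the latter is transported from $-N$ to $N$ via $\phi$. Smoothness of $\psi$ across $\partial N$ then becomes the matching condition $\psi_-|_{\partial N}=\phi_\#(\psi_+)|_{\partial N}$. By the first lemma of the appendix, evaluated on a frame whose first vector is the unit normal, $\phi_\#$ at a point of $\partial N$ is precisely Clifford multiplication by $E_1$ (up to the universal sign fixed by the choice of $\witi\phi_*$), which is exactly the clutching isomorphism used in \cite[Ch.~9]{Booss_Wojciechowski_93} to glue the doubled spinor bundle. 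This identifies the underlying bundles.

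Finally, for the Dirac operators I would translate $D^M$ through this identification using \eqref{formula.Dphifis}. On $\psi_+$ one recovers the Dirac operator of $N$ directly from the third lemma, while the anticommutation $D\phi_\#=-\phi_\# D$ produces on $\psi_-$ exactly the sign flip appearing in the Booss--Wojciechowski doubled Dirac operator, restoring a globally smooth first-order operator across $\partial N$. The main obstacle, and really the only subtle point, is consistent bookkeeping of orientations, Clifford signs, and of the universal $\pm$ in the choice of the spin lift $\witi\phi_*$, so that the clutching and the principal symbol match the Booss--Wojciechowski conventions on the nose; once these normalisations are pinned down, the identification of both the spinor bundle and the Dirac operator is forced by the lemmas already established.
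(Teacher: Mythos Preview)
Your proposal is correct and follows essentially the same route as the paper: identify the clutching map $G$ of \cite{Booss_Wojciechowski_93} with the map $\phi_\#$ coming from the reflection (restricted to the boundary, where it acts as Clifford multiplication by the normal $E_1$), and then use \eqref{formula.Dphifis} to see that the glued Dirac operator agrees with the classical one. The paper's own argument is terser and phrased in terms of the $\Sigma^\pm$-splitting (noting that under the identification $\phi_\#|_{\Sigma^+}=\Id$ while $\phi_\#|_{\Sigma^-}=-\Id$), but the content and the role of the sign bookkeeping you flag are the same.
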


To prove the proposition one has to compare the definitions in 
\cite{Booss_Wojciechowski_93} with ours. 
The map $\phi_\#:\Si_p^+M\to \Si_{\phi(p)}^-M$ corresponds 
to the map $G$ in \cite{Booss_Wojciechowski_93}. It follows
that $G^{-1}$ corresponds to 
$-\phi_\#:\Si_p^-M\to \Si_{\phi(p)}^+M$. 
In \cite{Booss_Wojciechowski_93}, the map $G$ is used to identify
$\Si_p^+M$ with $\Si_{\phi(p)}^-M$. 
Pay attention that with respect to this identification, the map
$\phi_\#:\Si_p^+M\to \Si_{\phi(p)}^-M$ is the identity, whereas
$\phi_\#:\Si_p^-M\to \Si_{\phi(p)}^+M$ is $-\Id$.
Equation~\eref{formula.Dphifis} says that this identification is 
compatible with the Dirac operator, and corresponds to (9.10) 
in \cite{Booss_Wojciechowski_93}.

\section*{Acknowledgements} 

We thank Martin M\"oller, Frankfurt, for providing a proof of 
Lemma~\ref{dim2.lem} using Teichm\"uller theory. His proof was an
inspiration for the argument in the case $n=2$ presented above. We
also thank the referee for many helpful suggestions.


\providecommand{\bysame}{\leavevmode\hbox to3em{\hrulefill}\thinspace}
\providecommand{\MR}{\relax\ifhmode\unskip\space\fi MR }
\providecommand{\MRhref}[2]{%
  \href{http://www.ams.org/mathscinet-getitem?mr=#1}{#2}
}
\providecommand{\href}[2]{#2}


\end{document}